\def\ps@pprintTitle{
 \let\@oddhead\@empty
 \let\@evenhead\@empty
 \def\@oddfoot{\centerline{\thepage}}
 \let\@evenfoot\@oddfoot}
\journal{Statistics and Probability Letters}
\newtheorem{theorem}{Theorem}
\newtheorem{corollary}[theorem]{Corollary}
\newtheorem{lemma}[theorem]{Lemma}
\newtheorem{remark}[theorem]{Remark}
\newenvironment{proof}[1][Proof]{\noindent\textbf{#1.} }{\ \rule{0.5em}{0.5em}}
\begin{document}

\begin{frontmatter}%
\title{On the Log Quantile Difference of the Temporal Aggregation of a Stable Moving Average Process}%
\author{A. W. Barker}%
\address{Department of Statistics, Macquarie University, Sydney, NSW 2109 Australia }%

\begin{abstract}%
A formula is derived for the log quantile difference of the temporal
aggregation of some types of stable moving average processes, MA(q). The
shape of the log quantile difference as a function of the aggregation level
is examined and shown to be dependent on the parameters of the moving
average process but not the quantile levels. The classes of invertible,
stable MA(1)\ and MA(2)\ processes are examined in more detail.
\end{abstract}%
\begin{keyword}%
Quantile \sep Stable\ Distribution \sep Temporal Aggregation

\end{keyword}%
\end{frontmatter}%

\section{Introduction}

We recall some basic facts and definitions about stable moving average
processes, temporal aggregation and log quantile differences.

\subsection{Stable Moving Average Processes}

Let $\left\{ X_{t}\right\} $ be the moving average process of order $q,$%
\begin{equation}
X_{t}=\sum_{j=0}^{q}\theta _{j}e_{t-j}  \label{eq:INT:1}
\end{equation}%
where $\theta _{0}=1$ and $\left\{ e_{t}\right\} $ is an independently and
identically distributed (iid)\ sequence of stable random variables such that%
\begin{equation}
e_{t}\sim S_{\alpha }^{0}\left( \beta ^{\left( 0\right) },\gamma ^{\left(
0\right) },\delta ^{\left( 0\right) }\right)   \label{eq:INT:2}
\end{equation}%
using the $S^{0}$ parameterisation of stable distributions in \cite{ci:N98}.
Let $\theta $ denote the $q+1$ dimensional vector of moving average
parameters%
\begin{equation}
\theta =\left( \theta _{0},\ldots ,\theta _{q}\right) ^{\prime }.
\label{eq:INT:3}
\end{equation}%
In $\left( \ref{eq:INT:2}\right) $ and the remainder of this paper, with the
addition of various subscripts or superscripts, we use $\alpha ,\beta
,\gamma $ and $\delta $ to denote respectively the stability, skewness,
scale and location parameters of a stable distribution. The $S^{0}$
parameterisation has the following useful properties.

\begin{description}
\item[(P1)] If $Y\sim S_{\alpha }^{0}\left( \beta ,\gamma ,\delta \right) ,$
then for any $a\neq 0,$ 
\begin{equation}
aY+b\sim S_{\alpha }^{0}\left( \text{sign}\left( a\right) \beta ,\left\vert
a\right\vert \gamma ,a\delta +b\right)  \label{eq:INT:4}
\end{equation}

\item[(P2)] If $Y_{1},Y_{2},\ldots ,Y_{n}$ are pairwise independent and $%
Y_{j}\sim S_{\alpha }^{0}\left( \beta _{j},\gamma _{j},\delta _{j}\right) $
for $j=1,\ldots ,n$ then $\sum_{j=1}^{n}Y_{j}\sim S_{\alpha }^{0}\left(
\beta ,\gamma ,\delta \right) $ where%
\begin{equation}
\gamma ^{\alpha }=\sum_{j=1}^{n}\gamma _{j}^{\alpha },\quad \beta =\dfrac{%
\sum_{j=1}^{n}\beta _{j}\gamma _{j}^{\alpha }}{\sum_{j=1}^{n}\gamma
_{j}^{\alpha }}  \label{eq:INT:5}
\end{equation}

and%
\begin{equation}
\delta =\left\{ 
\begin{array}{lc}
\sum_{j=1}^{n}\delta _{j}+\tan \left( \pi \alpha /2\right) \left[ \beta
\gamma -\sum_{j=1}^{n}\beta _{j}\gamma _{j}\right] & \text{if }\alpha \neq 1
\\ 
\sum_{j=1}^{n}\delta _{j}+\dfrac{2}{\pi }\left[ \beta \gamma \ln \gamma
-\sum_{j=1}^{n}\beta _{j}\gamma _{j}\ln \gamma _{j}\right] & \text{if }%
\alpha =1%
\end{array}%
\right.  \label{eq:INT:6}
\end{equation}
\end{description}

Properties (P1) and (P2)\ for $n=2$ were given in \cite{ci:N98}. The
extension of Property (P2) to general $n$ is a straightforward induction.

\subsection{Temporal Aggregation}

The temporal aggregation of the stochastic process $\left\{ X_{t}\right\} $
is generally defined as the weighted sum of past and current process values.%
In this paper, we consider only a special case of temporal aggregation,
sometimes referred to as flow aggregation, where all the weights equal $1.$
The flow aggregation of $\left\{ X_{t}\right\} $ is given by%
\begin{equation}
S_{t}^{\left( r\right) }=\sum_{i=0}^{r-1}X_{t-i}.  \label{eq:INT:8}
\end{equation}%

Henceforth, we refer to $\left\{ S_{t}^{\left( r\right) }\right\} $ as the
temporal aggregation of $\left\{ X_{t}\right\} $ or the aggregated process,
to $r$ as the aggregation level and to $\left\{ X_{t}\right\} $ as the base
process.

We note that a moving average process is the temporal aggregation of an iid
process and that the temporal aggregation of a moving average process is
also a moving average process. A recent survey on temporal aggregation
can be found in \cite{ci:SV08}.

\subsection{Log Quantile Difference}

Let $\xi _{p}$ denote the pth quantile of some distribution function. At
quantile levels $p_{1},p_{2},$ such that $0<p_{1}<p_{2}<1$, we define the
log quantile difference $\zeta _{p_{1},p_{2}}$ to be%
\begin{equation}
\zeta _{p_{1},p_{2}}=\ln \left( \xi _{p_{2}}-\xi _{p_{1}}\right) .
\label{eq:INT:9}
\end{equation}%
We assume for the remainder of this paper, that any random variable on which
a log quantile difference is calculated has a positive density at $\xi
_{p_{1}}$ and $\xi _{p_{2}}.$ This assumption implies uniqueness of the
quantiles $\xi _{p_{1}}$ and $\xi _{p_{2}}$ and that the log quantile
difference is finite. Let us recall that the stable distributions satisfy
this condition.

\section{Log Quantile Difference of the Temporal Aggregation of a Stable
Moving Average Process}

Let $\left\{ X_{t}\right\} $ be the moving average process of order $q$
defined in $\left( \ref{eq:INT:1}\right) $ with stable innovations $\left\{
e_{t}\right\} ,$ let $\left\{ S_{t}^{\left( r\right) }\right\} $ denote the
temporal aggregation of $\left\{ X_{t}\right\} $ at aggregation level $r$
and let $\zeta _{p_{1},p_{2}}^{\left( r\right) }$ and $\zeta
_{p_{1},p_{2}}^{\left( 0\right) }$ denote respectively the log quantile
difference of $\left\{ S_{t}^{\left( r\right) }\right\} $ and $\left\{
e_{t}\right\} $ at quantile levels $p_{1},p_{2}.$ In this section, we show
under certain conditions on $\left\{ X_{t}\right\} ,$ that%
\begin{equation}
\zeta _{p_{1},p_{2}}^{\left( r\right) }=\alpha ^{-1}\ln \left( r\left\vert
\sum_{i=0}^{q}\theta _{i}\right\vert ^{\alpha }+g_{\alpha }\left( \theta
\right) \right) +\zeta _{p_{1},p_{2}}^{\left( 0\right) }  \label{eq:LQD:1}
\end{equation}

where%
\begin{equation}
g_{\alpha }\left( \theta \right) =\left( \sum_{i=0}^{q-1}\left\vert
\sum_{j=0}^{i}\theta _{j}\right\vert ^{\alpha }-q\left\vert
\sum_{i=0}^{q}\theta _{i}\right\vert ^{\alpha }+\sum_{i=1}^{q}\left\vert
\sum_{j=i}^{q}\theta _{j}\right\vert ^{\alpha }\right)  \label{eq:LQD:2}
\end{equation}

We start with a general result which applies to all stable moving average
processes.

\begin{theorem}
\label{th:LQD:1}The distribution of the aggregated process $\left\{
S_{t}^{\left( r\right) }\right\} $ is given by%
\begin{equation}
S_{t}^{\left( r\right) }\sim S_{\alpha }^{0}\left( \beta ^{\left( r\right)
},\gamma ^{\left( r\right) },\delta ^{\left( r\right) }\right)
\label{eq:LQD:3}
\end{equation}%
where 
\begin{equation}
\gamma ^{\left( r\right) }=\left( \sum_{j=0}^{r+q-1}\left\vert
c_{j}\right\vert ^{\alpha }\right) ^{1/\alpha }\gamma ^{\left( 0\right)
},\quad \beta ^{\left( r\right) }=\dfrac{\sum_{j=0}^{r+q-1}\text{sign}\left(
c_{j}\right) \left\vert c_{j}\right\vert ^{\alpha }}{\sum_{j=0}^{r+q-1}\left%
\vert c_{j}\right\vert ^{\alpha }}\beta ^{\left( 0\right) },
\label{eq:LQD:4}
\end{equation}
if $\alpha \neq 1$%
\begin{eqnarray}
\delta ^{\left( r\right) } &=&\left( \sum_{j=0}^{r+q-1}c_{j}\right) \delta
^{\left( 0\right) }+     \label{eq:LQD:6a} \\
&&\tan \left( \pi \alpha /2\right) \left[ \beta ^{\left( r\right) }\gamma
^{\left( r\right) }-\beta ^{\left( 0\right) }\gamma ^{\left( 0\right)
}\left( \sum_{i=0}^{r+q-1}\text{sign}\left( c_{j}\right) \left\vert
c_{j}\right\vert \right) \right] \notag
\end{eqnarray}
if $\alpha = 1$%
\begin{eqnarray}
\delta ^{\left( r\right) } &=&\left( \sum_{j=0}^{r+q-1}c_{j}\right) \delta
^{\left( 0\right) }+    \label{eq:LQD:6b} \\
&&\dfrac{2}{\pi }\left[ \beta ^{\left( r\right) }\gamma ^{\left( r\right)
}\ln \gamma ^{\left( r\right) }-\beta ^{\left( 0\right) }\gamma ^{\left(
0\right) }\left( \sum_{i=0}^{r+q-1}\text{sign}\left( c_{j}\right) \left\vert
c_{j}\right\vert \ln \left( \left\vert c_{j}\right\vert \gamma ^{\left(
0\right) }\right) \right) \right] \notag
\end{eqnarray}

and%
\begin{equation}
c_{j}=\left\{ 
\begin{array}{ll}
\sum_{i=0}^{j}\theta _{i} & j=0,\ldots ,q-1 \\ 
\sum_{i=0}^{q}\theta _{i} & j=q,\ldots ,r-1 \\ 
\sum_{i=j-r+1}^{q}\theta _{i} & j=r,\ldots ,r+q-1%
\end{array}%
\right. .  \label{eq:LQD:7}
\end{equation}
\end{theorem}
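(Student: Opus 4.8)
The plan is to represent the aggregated process as a single linear combination of the underlying iid stable innovations and then read off its distribution directly from the two structural properties (P1) and (P2) of the $S^{0}$ parameterisation. Everything reduces to one reindexing step followed by bookkeeping.

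First I would substitute the definition (\ref{eq:INT:1}) of $X_{t}$ into the flow aggregation (\ref{eq:INT:8}) to obtain the double sum
\[
S_{t}^{\left( r\right) }=\sum_{i=0}^{r-1}\sum_{j=0}^{q}\theta _{j}\,e_{t-i-j}.
\]
Setting $k=i+j$ and collecting the coefficient of each distinct innovation $e_{t-k}$, the index $k$ ranges over $0,\ldots ,r+q-1$ and the coefficient of $e_{t-k}$ is $c_{k}=\sum_{m=\max (0,k-r+1)}^{\min (k,q)}\theta _{m}$. Splitting this into the three regimes $k\le q-1$, $q\le k\le r-1$ and $k\ge r$ then recovers the piecewise formula (\ref{eq:LQD:7}): these are the rising head (partial sums $\sum_{i=0}^{k}\theta _{i}$), the flat middle carrying the full weight $\sum_{i=0}^{q}\theta _{i}$, and the falling tail (reverse partial sums), i.e.\ the trapezoidal shape of the convolution of the length-$r$ aggregation window with the MA weights. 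This reindexing is the substantive step of the argument.

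With $S_{t}^{\left( r\right) }=\sum_{k=0}^{r+q-1}c_{k}e_{t-k}$ in hand, Property (P1) applied to each $e_{t-k}\sim S_{\alpha }^{0}(\beta ^{\left( 0\right) },\gamma ^{\left( 0\right) },\delta ^{\left( 0\right) })$ gives $c_{k}e_{t-k}\sim S_{\alpha }^{0}(\text{sign}(c_{k})\beta ^{\left( 0\right) },|c_{k}|\gamma ^{\left( 0\right) },c_{k}\delta ^{\left( 0\right) })$ whenever $c_{k}\neq 0$. Since innovations at distinct times are independent, so are the scaled terms, and Property (P2) applies to their sum. Substituting $\beta _{k}=\text{sign}(c_{k})\beta ^{\left( 0\right) }$, $\gamma _{k}=|c_{k}|\gamma ^{\left( 0\right) }$ and $\delta _{k}=c_{k}\delta ^{\left( 0\right) }$ into (\ref{eq:INT:5}) yields the scale and skewness in (\ref{eq:LQD:4}) at once, after cancelling the common factor $(\gamma ^{\left( 0\right) })^{\alpha }$ from the numerator and denominator of $\beta ^{\left( r\right) }$. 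For the location, the same substitution into (\ref{eq:INT:6}), together with the identity $\text{sign}(c_{k})|c_{k}|=c_{k}$ (so that $\sum_{k}\delta _{k}=(\sum_{k}c_{k})\delta ^{\left( 0\right) }$), produces (\ref{eq:LQD:6a}) when $\alpha \neq 1$ and (\ref{eq:LQD:6b}) when $\alpha =1$, the logarithm $\ln \gamma _{k}=\ln (|c_{k}|\gamma ^{\left( 0\right) })$ being exactly what generates the $\alpha =1$ form.

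I expect the only genuine subtlety to be the treatment of vanishing coefficients. A term with $c_{k}=0$ is a point mass at $0$, so (P1) does not formally apply, but such a term contributes nothing to any of the sums in (P2): its scale, its skewness weight $|c_{k}|^{\alpha }$, and its location increment $c_{k}\delta ^{\left( 0\right) }$ all vanish. It may therefore be discarded, with the harmless conventions $|0|^{\alpha }=0$ and $0\cdot \ln 0=0$, and the induction already used to extend (P2) to general $n$ then applies to the nondegenerate terms. A secondary point worth noting is that the clean three-case split in (\ref{eq:LQD:7}) presumes $r\ge q$, so that the three index ranges partition $\{0,\ldots ,r+q-1\}$ consistently with the general coefficient formula above; that general formula is valid for every $r$, so no separate argument is needed, and the theorem follows.
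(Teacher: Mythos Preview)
Your proposal is correct and follows exactly the paper's approach: rewrite $S_{t}^{(r)}$ as $\sum_{j=0}^{r+q-1}c_{j}e_{t-j}$ by reindexing the double sum, then invoke (P1) and (P2). The paper's own proof states precisely this in three lines without spelling out the reindexing or the parameter bookkeeping; your version simply makes those steps explicit and also flags the $c_{k}=0$ convention and the $r\ge q$ hypothesis, both of which the paper leaves implicit.
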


\begin{proof}
From the definition of the aggregated process $\left\{ S_{t}^{\left( r\right)
}\right\} ,$ we have for $r\geq q$ that%
\begin{eqnarray}
S_{t}^{\left( r\right) } &=&\sum_{i=0}^{r-1}X_{t-i}  \notag \\
&=&\sum_{i=0}^{r-1}\sum_{j=0}^{q}\theta _{j}e_{t-i-j}  \notag \\
&=&\sum_{j=0}^{r+q-1}c_{j}e_{t-j}  \label{eq:LQD:8}
\end{eqnarray}%
where $c_{j}$ is given by $\left( \ref{eq:LQD:7}\right) .$ An application of
properties (P1)\ and (P2)\ proves the theorem.
\end{proof}

Whilst Theorem \ref{th:LQD:1} provides formulae for the stable distribution
parameters of the aggregated process, in general it is not possible to
derive from these a formula for the log quantile difference of the
aggregated process. However, we can derive such a formula for those
processes where $\beta ^{\left( r\right) }=\beta ^{\left( 0\right) }.$ To
achieve this, we make use of the following lemma, which shows how the log
quantile difference of a random variable is affected by linear
transformations.

\begin{lemma}
\label{lem:LQD:1}Suppose $X$ is a random variable and $Y=aX+b$ for some $a>0$
and $b\in \mathbb{R}.$ Let $\xi _{X;p}$ and $\xi _{Y;p}$ denote respectively the pth quantile of 
$X$ and $Y,$ then%
\begin{equation}
\xi _{Y;p}=a\xi _{X;p}+b.  \label{eq:LQD:9}
\end{equation}%
Let $\zeta _{X;p_{1},p_{2}}$ and $\zeta _{Y;p_{1},p_{2}}$ denote
respectively the log quantile difference of $X$ and $Y$ at quantile levels $%
p_{1},p_{2},$ then%
\begin{equation}
\zeta _{Y;p_{1},p_{2}}=\ln a+\zeta _{X;p_{1},p_{2}}.  \label{eq:LQD:10}
\end{equation}
\end{lemma}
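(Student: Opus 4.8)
The plan is to establish the two equations in sequence, using the first (the quantile transformation) as the primary engine for the second (the log quantile difference identity). The key observation is that for a strictly increasing, affine map $y = ax + b$ with $a > 0$, the quantile function transforms in a clean covariant way, and the log quantile difference then follows by direct substitution and a logarithm identity.

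First I would prove the quantile identity $\left(\ref{eq:LQD:9}\right)$. Let $F_X$ and $F_Y$ denote the distribution functions of $X$ and $Y = aX + b$. Since $a > 0$, the map $x \mapsto ax + b$ is strictly increasing, so for any real $y$ we have $F_Y(y) = P(aX + b \leq y) = P(X \leq (y-b)/a) = F_X\!\left((y-b)/a\right)$. The defining property of the $p$th quantile is that $\xi_{Y;p}$ satisfies $F_Y(\xi_{Y;p}) = p$ (using the standing assumption that the density is positive at the relevant quantiles, so quantiles are unique and the distribution function attains the value $p$). Substituting gives $F_X\!\left((\xi_{Y;p} - b)/a\right) = p$, whence $(\xi_{Y;p} - b)/a = \xi_{X;p}$ by uniqueness of the quantile of $X$, and rearranging yields $\xi_{Y;p} = a\xi_{X;p} + b$, which is $\left(\ref{eq:LQD:9}\right)$.

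Next I would derive the log quantile difference identity $\left(\ref{eq:LQD:10}\right)$. Applying $\left(\ref{eq:LQD:9}\right)$ at the two quantile levels $p_1$ and $p_2$, the additive constant $b$ cancels in the difference:
\begin{equation}
\xi_{Y;p_2} - \xi_{Y;p_1} = \left(a\xi_{X;p_2} + b\right) - \left(a\xi_{X;p_1} + b\right) = a\left(\xi_{X;p_2} - \xi_{X;p_1}\right). \notag
\end{equation}
Because $p_1 < p_2$ and the density is positive at these quantiles, the difference $\xi_{X;p_2} - \xi_{X;p_1}$ is strictly positive and finite, so taking logarithms is legitimate and, using $\ln(ac) = \ln a + \ln c$ together with $a > 0$, we obtain $\zeta_{Y;p_1,p_2} = \ln\!\left(\xi_{Y;p_2} - \xi_{Y;p_1}\right) = \ln a + \ln\!\left(\xi_{X;p_2} - \xi_{X;p_1}\right) = \ln a + \zeta_{X;p_1,p_2}$, which is $\left(\ref{eq:LQD:10}\right)$.

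I expect no serious obstacle here; this is an elementary but foundational lemma. The only point requiring care is the role of the sign of $a$: the hypothesis $a > 0$ is what guarantees the transformation is increasing (so quantiles map to quantiles rather than being reflected) and that $\ln a$ is well defined. Had $a$ been allowed to be negative, the quantile levels would reverse and absolute values would be needed; the restriction $a > 0$ is precisely what keeps both formulas clean, and it is the case relevant to the scale factors $\gamma^{(r)}/\gamma^{(0)}$ arising from Theorem $\ref{th:LQD:1}$.
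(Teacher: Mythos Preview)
Your proof is correct and follows essentially the same approach as the paper's own proof, which is a terse two-line argument computing $p = P(X \leq \xi_{X;p}) = P(Y \leq a\xi_{X;p} + b)$ and then declaring that $\left(\ref{eq:LQD:10}\right)$ follows immediately. You have simply made explicit the steps the paper leaves implicit (the role of $a>0$, uniqueness of quantiles, and the logarithm manipulation).
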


\begin{proof}
By assumption we have%
\begin{eqnarray}
p &=&P\left( X\leq \xi _{X;p}\right)  \notag \\
&=&P\left( Y\leq a\xi _{X;p}+b\right)  \label{eq:LQD:11}
\end{eqnarray}%
which proves $\left( \ref{eq:LQD:9}\right) $ and $\left( \ref{eq:LQD:10}%
\right) $ follows immediately.
\end{proof}

We can now prove the formula for $\zeta _{p_{1},p_{2}}^{\left( r\right) }$
in $\left( \ref{eq:LQD:1}\right) $ under certain conditions on the base
process, $\left\{ X_{t}\right\} $.

\begin{theorem}
\label{th:LQD:2} If the base process $\left\{ X_{t}\right\} $ satisfies
either

\begin{description}
\item[(A1)] 
\begin{equation}
\beta ^{\left( 0\right) }=0  \label{eq:LQD:12}
\end{equation}
\end{description}

or

\begin{description}
\item[(A2)] 
\begin{equation}
\sum_{j=0}^{i}\theta _{j}\geq 0\text{\quad for }i=0,\ldots ,q-1\text{ and }%
\sum_{j=i}^{q}\theta _{j}\geq 0\text{\quad for }i=1,\ldots ,q
\label{eq:LQD:13}
\end{equation}
\end{description}

then for $r\geq q$ the log quantile difference $\zeta _{p_{1},p_{2}}^{\left(
r\right) }$ is given by the formula in $\left( \ref{eq:LQD:1}\right) $.
\end{theorem}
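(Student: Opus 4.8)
The plan is to combine Theorem~\ref{th:LQD:1}, which gives the stable parameters of $S_t^{(r)}$, with Lemma~\ref{lem:LQD:1}, which controls how the log quantile difference behaves under the positive-affine map $Y=aX+b$. The key observation is that a stable random variable is an affine image of a standardised one with the same $\alpha$ and $\beta$: writing $S_t^{(r)}\sim S_\alpha^0(\beta^{(r)},\gamma^{(r)},\delta^{(r)})$ and $e_t\sim S_\alpha^0(\beta^{(0)},\gamma^{(0)},\delta^{(0)})$, if the two share the \emph{same} $\alpha$ and $\beta$, then by property (P1) they differ only by a positive scaling $a=\gamma^{(r)}/\gamma^{(0)}$ and a location shift $b$. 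Lemma~\ref{lem:LQD:1} then yields immediately that $\zeta_{p_1,p_2}^{(r)}=\ln(\gamma^{(r)}/\gamma^{(0)})+\zeta_{p_1,p_2}^{(0)}$, so the whole problem reduces to showing that $\ln(\gamma^{(r)}/\gamma^{(0)})$ equals the $\alpha^{-1}\ln(\cdot)$ term in $\left(\ref{eq:LQD:1}\right)$.

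The first genuine step is therefore to verify the skewness-matching condition $\beta^{(r)}=\beta^{(0)}$ under either (A1) or (A2). Under (A1) this is trivial: if $\beta^{(0)}=0$ then the formula for $\beta^{(r)}$ in $\left(\ref{eq:LQD:4}\right)$ gives $\beta^{(r)}=0=\beta^{(0)}$. Under (A2) the point is that the coefficients $c_j$ in $\left(\ref{eq:LQD:7}\right)$ are all nonnegative: the middle block equals $\sum_{i=0}^q\theta_i$, which is the product of a head partial sum (take $i=q-1$, giving $\sum_{j=0}^{q-1}\theta_j$, plus $\theta_q$) — more directly, $\sum_{i=0}^q\theta_i=\sum_{j=1}^q\theta_j+\theta_0$ is covered by the $i=1$ tail sum plus nonnegativity, and the head and tail blocks are exactly the quantities assumed nonnegative in $\left(\ref{eq:LQD:13}\right)$. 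With every $c_j\geq 0$ we have $\mathrm{sign}(c_j)|c_j|^\alpha=|c_j|^\alpha$, so the numerator and denominator in $\left(\ref{eq:LQD:4}\right)$ coincide up to the factor $\beta^{(0)}$, giving $\beta^{(r)}=\beta^{(0)}$. I would check carefully that the stated nonnegativity of the head sums ($i=0,\ldots,q-1$) and tail sums ($i=1,\ldots,q$) indeed covers the constant middle value $\sum_{i=0}^q\theta_i$, which is the one place where the bookkeeping could go wrong.

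Once $\beta^{(r)}=\beta^{(0)}$ and $\alpha^{(r)}=\alpha^{(0)}=\alpha$ are established, property (P1) guarantees the existence of $a>0,b$ with $S_t^{(r)}\stackrel{d}{=}aX+b$ where $X\sim S_\alpha^0(\beta^{(0)},\gamma^{(0)},\delta^{(0)})$, and matching scale parameters forces $a=\gamma^{(r)}/\gamma^{(0)}=\left(\sum_{j=0}^{r+q-1}|c_j|^\alpha\right)^{1/\alpha}$ by $\left(\ref{eq:LQD:4}\right)$. Applying Lemma~\ref{lem:LQD:1} gives $\zeta_{p_1,p_2}^{(r)}=\alpha^{-1}\ln\!\left(\sum_{j=0}^{r+q-1}|c_j|^\alpha\right)+\zeta_{p_1,p_2}^{(0)}$.

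The remaining task, which I expect to be the main obstacle, is the purely algebraic identity $\sum_{j=0}^{r+q-1}|c_j|^\alpha=r\,|\sum_{i=0}^{q}\theta_i|^\alpha+g_\alpha(\theta)$. Splitting the sum over $j$ according to the three cases in $\left(\ref{eq:LQD:7}\right)$, the middle block ($j=q,\ldots,r-1$) contributes $(r-q)\,|\sum_{i=0}^q\theta_i|^\alpha$, the head block ($j=0,\ldots,q-1$) contributes $\sum_{i=0}^{q-1}|\sum_{j=0}^{i}\theta_j|^\alpha$, and the tail block ($j=r,\ldots,r+q-1$, reindexed) contributes $\sum_{i=1}^{q}|\sum_{j=i}^q\theta_j|^\alpha$. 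Adding these and regrouping $(r-q)=r-q$ reproduces exactly $r\,|\sum_{i=0}^q\theta_i|^\alpha$ together with the $-q\,|\sum_{i=0}^q\theta_i|^\alpha$ correction and the two partial-sum sums that constitute $g_\alpha(\theta)$ in $\left(\ref{eq:LQD:2}\right)$. The care here is entirely in the index bookkeeping — getting the block boundaries and the reindexing of the tail right — and in noting that the decomposition requires $r\geq q$ so that the middle block is nonempty, which matches the hypothesis of the theorem.
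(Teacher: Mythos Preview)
Your proposal is correct and follows essentially the same approach as the paper: invoke Theorem~\ref{th:LQD:1}, verify $\beta^{(r)}=\beta^{(0)}$ under (A1) or (A2) (the paper likewise notes that the $i=1$ tail sum $\sum_{j=1}^q\theta_j\ge 0$ together with $\theta_0=1$ forces $\sum_{j=0}^q\theta_j>0$, so the middle block is covered), apply Lemma~\ref{lem:LQD:1} to get $\zeta_{p_1,p_2}^{(r)}=\ln(\gamma^{(r)}/\gamma^{(0)})+\zeta_{p_1,p_2}^{(0)}$, and then substitute. You actually spell out the final block decomposition of $\sum_j|c_j|^\alpha$ in more detail than the paper, which simply says ``substituting $\left(\ref{eq:LQD:4}\right)$ and $\left(\ref{eq:LQD:7}\right)$ proves the theorem.''
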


\begin{proof}
From Theorem \ref{th:LQD:1}, we have for $r\geq q$ that the aggregated
process, $\left\{ S_{t}^{\left( r\right) }\right\} ,$ has a stable
distribution given by 
\begin{equation}
S_{t}^{\left( r\right) }\sim S_{\alpha }^{0}\left( \beta ^{\left( r\right)
},\gamma ^{\left( r\right) },\delta ^{\left( r\right) }\right)
\label{eq:LQD:14}
\end{equation}%
where $\beta ^{\left( r\right) },\gamma ^{\left( r\right) }$ and $\delta
^{\left( r\right) }$ are as shown in $\left( \ref{eq:LQD:4}\right)$ and 
$\left( \ref{eq:LQD:6a}\right) $ or $\left( \ref{eq:LQD:6b}\right) $. If
(A2) is satisfied, then all the $c_{j}$ terms in $\left( \ref{eq:LQD:7}%
\right) $ are non-negative and so 
\begin{equation}
\text{sign}\left( c_{j}\right) \left\vert c_{j}\right\vert ^{\alpha
}=\left\vert c_{j}\right\vert ^{\alpha }\text{\quad for }j=0,\ldots ,r+q-1.
\label{eq:LQD:15}
\end{equation}%
Note that $\sum_{j=1}^{q}\theta _{j}\geq 0$ implies that $%
\sum_{j=0}^{q}\theta _{j}>0.$ Thus, if either (A1)\ or (A2)\ is satisfied,
then%
\begin{equation}
\beta ^{\left( r\right) }=\beta ^{\left( 0\right) }  \label{eq:LQD:16}
\end{equation}%
and $\left\{ S_{t}^{\left( r\right) }\right\} $ is a scale and location
transformation of the innovations $\left\{ e_{t}\right\} .$ Thus 
\begin{equation}
\dfrac{S_{t}^{\left( r\right) }-\delta ^{\left( r\right) }}{\gamma ^{\left(
r\right) }}\sim \dfrac{e_{t}-\delta ^{\left( 0\right) }}{\gamma ^{\left(
0\right) }}  \label{eq:LQD:17}
\end{equation}%
and so from Lemma \ref{lem:LQD:1}%
\begin{equation}
\zeta _{p_{1},p_{2}}^{\left( r\right) }=\ln \left( \gamma ^{\left( r\right)
}/\gamma ^{\left( 0\right) }\right) +\zeta _{p_{1},p_{2}}^{\left( 0\right) }.
\label{eq:LQD:18}
\end{equation}%
Substituting $\left( \ref{eq:LQD:4}\right) $ and $\left( \ref{eq:LQD:7}%
\right) $ proves the theorem.
\end{proof}

Although for our purposes the formula for $\zeta _{p_{1},p_{2}}^{\left(
r\right) }$ in $\left( \ref{eq:LQD:1}\right) $ is only valid for integer
values of $r\geq q,$ nonetheless it is a function of $r$ which is
well-defined for all real positive values of $r.$ Formally, we can take
partial derivatives of $\zeta _{p_{1},p_{2}}^{\left( r\right) }$ with
respect to $\ln r,$ to get for $r\geq q$ 
\begin{equation}
\dfrac{\partial }{\partial \ln r}\zeta _{p_{1},p_{2}}^{\left( r\right)
}=\alpha ^{-1}\dfrac{r\left\vert \sum_{i=0}^{q}\theta _{i}\right\vert
^{\alpha }}{r\left\vert \sum_{i=0}^{q}\theta _{i}\right\vert ^{\alpha
}+g_{\alpha }\left( \theta _{1},\ldots ,\theta _{q}\right) }
\label{eq:LQD:19}
\end{equation}%
and%
\begin{equation}
\dfrac{\partial ^{2}}{\left( \partial \ln r\right) ^{2}}\zeta
_{p_{1},p_{2}}^{\left( r\right) }=\alpha ^{-1}\dfrac{r\left\vert
\sum_{i=0}^{q}\theta _{i}\right\vert ^{\alpha }g_{\alpha }\left( \theta
_{1},\ldots ,\theta _{q}\right) }{\left( r\left\vert \sum_{i=0}^{q}\theta
_{i}\right\vert ^{\alpha }+g_{\alpha }\left( \theta _{1},\ldots ,\theta
_{q}\right) \right) ^{2}}.  \label{eq:LQD:20}
\end{equation}%
and draw conclusions on the shape of $\zeta _{p_{1},p_{2}}^{\left( r\right)
} $.

\begin{corollary}
\label{cor:LQD:1}Let $\left\{ X_{t}\right\} $ be a MA(q) process satisfying
the conditions of Theorem \ref{th:LQD:2}. Then 
\begin{equation}
\lim_{r\rightarrow \infty }\dfrac{\partial }{\partial \ln r}\zeta
_{p_{1},p_{2}}^{\left( r\right) }=\alpha ^{-1}.  \label{eq:LQD:21}
\end{equation}%
For $r\geq q$, 
\begin{equation}
\text{sign}\left( \dfrac{\partial ^{2}}{\left( \partial \ln r\right) ^{2}}%
\zeta _{p_{1},p_{2}}^{\left( r\right) }\right) =\text{sign}\left( g_{\alpha
}\left( \theta \right) \right)  \label{eq:LQD:22}
\end{equation}%
and therefore%
\begin{equation}
\begin{tabular}{l}
$\text{if }g_{\alpha }\left( \theta \right) >0\text{ then }\zeta
_{p_{1},p_{2}}^{\left( r\right) }\text{ is convex in }\ln r,$ \\ 
$\text{if }g_{\alpha }\left( \theta \right) =0\text{ then }\zeta
_{p_{1},p_{2}}^{\left( r\right) }\text{ is linear in }\ln r,$ \\ 
$\text{if }g_{\alpha }\left( \theta \right) <0\text{ then }\zeta
_{p_{1},p_{2}}^{\left( r\right) }\text{ is concave in }\ln r.$%
\end{tabular}
\label{eq:LQD:23}
\end{equation}
\end{corollary}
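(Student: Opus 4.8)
The plan is to read the three conclusions directly off the closed forms for the first and second derivatives already recorded in (\ref{eq:LQD:19}) and (\ref{eq:LQD:20}), obtained by differentiating (\ref{eq:LQD:1}) twice in $\ln r$. With those expressions in hand the corollary is essentially a matter of tracking signs, so the bulk of the work is establishing positivity of the various factors.

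First I would collect the positivity facts. The stability index obeys $\alpha\in(0,2]$, hence $\alpha^{-1}>0$. The leading coefficient is strictly positive under the hypotheses of Theorem \ref{th:LQD:2}: when (A2) holds, $\theta_{0}=1$ together with $\sum_{j=1}^{q}\theta_{j}\geq 0$ gives $\sum_{i=0}^{q}\theta_{i}\geq 1>0$, which is exactly the remark used in the proof of Theorem \ref{th:LQD:2}, so $|\sum_{i=0}^{q}\theta_{i}|^{\alpha}>0$. Finally, the common denominator of both derivatives equals $\sum_{j=0}^{r+q-1}|c_{j}|^{\alpha}=(\gamma^{(r)}/\gamma^{(0)})^{\alpha}$, which is strictly positive since $\gamma^{(r)}>0$; in particular $r|\sum_{i=0}^{q}\theta_{i}|^{\alpha}+g_{\alpha}(\theta)>0$ for every admissible $r$.

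For the limit (\ref{eq:LQD:21}) I would rewrite (\ref{eq:LQD:19}) by dividing numerator and denominator by $r|\sum_{i=0}^{q}\theta_{i}|^{\alpha}$,
\[
\frac{\partial}{\partial\ln r}\zeta_{p_{1},p_{2}}^{(r)}=\alpha^{-1}\left(1+\frac{g_{\alpha}(\theta)}{r\left|\sum_{i=0}^{q}\theta_{i}\right|^{\alpha}}\right)^{-1},
\]
and let $r\to\infty$: since $g_{\alpha}(\theta)$ is constant in $r$ while the leading term grows without bound, the bracket tends to $1$ and the derivative to $\alpha^{-1}$. For the sign statement (\ref{eq:LQD:22}), in (\ref{eq:LQD:20}) the prefactor $\alpha^{-1}r|\sum_{i=0}^{q}\theta_{i}|^{\alpha}$ and the squared denominator are both strictly positive for $r\geq q$, so the sign of the second derivative equals the sign of the remaining factor $g_{\alpha}(\theta)$. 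The trichotomy (\ref{eq:LQD:23}) is then the usual second-derivative criterion read in the variable $\ln r$: a positive, zero, or negative second derivative makes $\zeta_{p_{1},p_{2}}^{(r)}$ convex, linear, or concave in $\ln r$.

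I do not expect a serious obstacle, as everything reduces to the positivity bookkeeping above. The one point that genuinely requires care is the nonvanishing of $|\sum_{i=0}^{q}\theta_{i}|^{\alpha}$: this is what forces the limit to equal $\alpha^{-1}$ rather than collapse to $0$, and what makes the sign of the second derivative truly governed by $g_{\alpha}(\theta)$. It is automatic under (A2); under (A1) it is the only thing I would check separately, and it holds in the invertible setting of interest, where $\sum_{i=0}^{q}\theta_{i}\neq 0$.
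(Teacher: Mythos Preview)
Your proposal is correct and matches the paper's approach exactly: the paper gives no separate proof of the corollary, treating it as an immediate reading of the derivative formulas (\ref{eq:LQD:19}) and (\ref{eq:LQD:20}), and you do precisely this while spelling out the positivity bookkeeping. Your flag about the nonvanishing of $\sum_{i=0}^{q}\theta_{i}$ under (A1) alone is a genuine point the paper leaves implicit; as you note, it is automatic under (A2) and under invertibility, which covers the applications in Section~3.
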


\begin{remark}
\label{rem:LQD:3}If $\beta ^{\left( 0\right) }\neq 0$ and any of the $c_{j}$
terms in $\left( \ref{eq:LQD:7}\right) $ are negative, then $\beta ^{\left(
r\right) }\neq \beta ^{\left( 0\right) }.$ In that case, $\left( \ref%
{eq:LQD:17}\right) $ and consequently $\left( \ref{eq:LQD:1}\right) $ do not
hold. In general, equality relations for the quantiles of the sums of random
variables in terms of the quantiles of the summands are difficult to
achieve. (\cite{ci:WG86}, \cite{ci:LD89})
\end{remark}

\begin{remark}
\label{rem:LQD:4}In the special case where $\left\{ X_{t}\right\} $ is iid,
we have%
\begin{eqnarray}
\gamma ^{\left( r\right) } &=&r^{1/\alpha }\gamma ^{\left( 0\right) },\quad
\beta ^{\left( r\right) }=\beta ^{\left( 0\right) },  \notag \\
\delta ^{\left( r\right) } &=&\left\{ 
\begin{array}{lc}
r\delta ^{\left( 0\right) }+\tan \left( \pi \alpha /2\right) \beta ^{\left(
0\right) }\gamma ^{\left( 0\right) }\left( r^{1/\alpha }-r\right)  & \text{%
if }\alpha \neq 1 \\ 
r\delta ^{\left( 0\right) }+\dfrac{2}{\pi }\beta ^{\left( 0\right) }\gamma
^{\left( 0\right) }r\ln r & \text{if }\alpha =1%
\end{array}%
\right.   \label{eq:LQD:25}
\end{eqnarray}
and the expression for $\zeta _{p_{1},p_{2}}^{\left( r\right) }$ in $\left( %
\ref{eq:LQD:1}\right) $ reduces to%
\begin{equation}
\zeta _{p_{1},p_{2}}^{\left( r\right) }=\alpha ^{-1}\ln r+\zeta
_{p_{1},p_{2}}^{\left( 0\right) }.  \label{eq:LQD:26}
\end{equation}%
Note that the expressions for $\delta ^{\left( r\right) }$ in $\left( \ref%
{eq:LQD:25}\right) $ are different from those derived in Section 2.2 of 
\cite{ci:CCZW08} which the author believes to be in error.
\end{remark}

\begin{remark}
\label{rem:LQD:5}The derivatives in $\left( \ref{eq:LQD:19}\right) $ and $%
\left( \ref{eq:LQD:20}\right) $ and therefore the results of Corollary \ref%
{cor:LQD:1} do not depend on $p_{1},p_{2}$ for all $r\geq q$ and all $\alpha
.$
\end{remark}

In the next section we examine Corollary \ref{cor:LQD:1} in more detail for
the special cases of invertible MA(1)\ and MA(2)\ processes.

\section{Invertible Stable MA(1) and MA(2) Processes}

An invertible MA(q) process is one where all roots of the polynomial%
\begin{equation}
1+\theta _{1}z+\cdots +\theta _{q}z^{q}=0  \label{eq:ISP:1}
\end{equation}%
lie outside the complex unit circle, $\left\vert z\right\vert >1$. The
region of $\mathbb{R}^{q}$ 
in which invertible parameters reside is referred to as the
invertibility region. The invertibility region of MA(1)\ processes is the
set 
\begin{equation}
\left\{ \theta _{1}:\left\vert \theta _{1}\right\vert <1\right\} .
\label{eq:ISP:2}
\end{equation}%
The invertibility region of MA(2)\ processes is the set%
\begin{equation}
\left\{ \left( \theta _{1},\theta _{2}\right) :\theta _{2}<1\text{ and }%
\theta _{1}+\theta _{2}>-1\text{ and }\theta _{1}-\theta _{2}<1\right\} .
\label{eq:ISP:3}
\end{equation}%
\bigskip Expressions for the invertibility region of higher order MA
processes can be found in \cite{ci:W56}. In this section we identify regions
of the invertibility region of MA(1)\ and MA(2)\ processes where $g_{\alpha
}\left( \theta \right) $ is either positive, zero or negative for various
values of $\alpha .$ To conduct this analysis we require the following lemma.

\begin{lemma}
\label{lem:ISP:1}%
\begin{equation}
\begin{tabular}{ll}
$\text{if }x,y>0\text{ and }0<\alpha <1$ & then $\left\vert x+y\right\vert
^{\alpha }<\left\vert x\right\vert ^{\alpha }+\left\vert y\right\vert
^{\alpha }$ \\ 
$\text{if }x,y>0\text{ and }\alpha =1$ & then $\left\vert x+y\right\vert
^{\alpha }=\left\vert x\right\vert ^{\alpha }+\left\vert y\right\vert
^{\alpha }$ \\ 
$\text{if }x,y>0\text{ and }1<\alpha \leq 2$ & then $\left\vert
x+y\right\vert ^{\alpha }>\left\vert x\right\vert ^{\alpha }+\left\vert
y\right\vert ^{\alpha }$%
\end{tabular}%
.  \label{eq:ISP:4}
\end{equation}
\end{lemma}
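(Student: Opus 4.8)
The plan is to exploit the positivity hypothesis to remove the absolute values and then reduce everything to a single elementary comparison. Since $x,y>0$ we also have $x+y>0$, so all three quantities in $\left(\ref{eq:ISP:4}\right)$ equal their unsigned powers, and the claim becomes a comparison of $\left(x+y\right)^{\alpha}$ with $x^{\alpha}+y^{\alpha}$. The inequality is homogeneous of degree $\alpha$ in $\left(x,y\right)$, so I would normalise by $x+y$: writing $u=x/\left(x+y\right)$ and $v=y/\left(x+y\right)$, we have $u,v\in\left(0,1\right)$ and $u+v=1$, and dividing through by $\left(x+y\right)^{\alpha}$ reduces all three cases to determining the sign of $u^{\alpha}+v^{\alpha}-1=u^{\alpha}+v^{\alpha}-\left(u+v\right)$.

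The heart of the argument is then the following one-variable fact: for fixed $w\in\left(0,1\right)$ the map $\alpha\mapsto w^{\alpha}=\exp\left(\alpha\ln w\right)$ is strictly decreasing, because $\ln w<0$. Comparing its value at a general $\alpha$ with its value at $\alpha=1$, where $w^{1}=w$, gives $w^{\alpha}>w$ for $\alpha<1$, $w^{\alpha}=w$ for $\alpha=1$, and $w^{\alpha}<w$ for $\alpha>1$. Applying this separately to $w=u$ and $w=v$ and adding the two inequalities yields $u^{\alpha}+v^{\alpha}>u+v=1$ when $0<\alpha<1$, exact equality when $\alpha=1$, and $u^{\alpha}+v^{\alpha}<1$ when $1<\alpha\leq 2$. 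Undoing the normalisation by multiplying back through by $\left(x+y\right)^{\alpha}$ reproduces the three claimed relations.

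I expect no substantive obstacle here; the only point requiring care is the strictness of the inequalities. This is guaranteed precisely because $x$ and $y$ are assumed strictly positive, which forces $u,v\in\left(0,1\right)$ to lie strictly inside the open interval, so that the strict monotonicity of $\alpha\mapsto w^{\alpha}$ applies to each term rather than collapsing to an endpoint. An alternative route is to differentiate $f\left(\alpha\right)=\left(x+y\right)^{\alpha}-x^{\alpha}-y^{\alpha}$ in $\alpha$ and track the sign of $f$ relative to its zero at $\alpha=1$, but the normalisation argument above is cleaner and makes the role of the single threshold $\alpha=1$ transparent, so that is the version I would present.
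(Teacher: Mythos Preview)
Your argument is correct and complete: the homogeneity reduction to $u+v=1$ with $u,v\in(0,1)$, followed by the strict monotonicity of $\alpha\mapsto w^{\alpha}$ for $w\in(0,1)$, establishes all three cases with the required strictness.

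The paper takes a different route. It invokes the strict convexity of $f(x)=x^{\alpha}$ on $(0,\infty)$ for $1<\alpha\leq 2$ and of $f(x)=-x^{\alpha}$ for $0<\alpha<1$, together with $f(0)=0$, to obtain the superadditivity inequality $f(x)+f(y)<f(x+y)$ (which the paper attributes to Jensen). So the paper's argument varies the function and fixes the points $x,y$, whereas you fix the (normalised) points $u,v$ and vary the exponent $\alpha$. Your approach has the virtue of making the threshold $\alpha=1$ appear naturally as the single point where $w^{\alpha}=w$, and it avoids checking convexity separately in the two regimes; the paper's convexity argument, on the other hand, is the more standard packaging and would generalise immediately to any strictly convex $f$ with $f(0)=0$, not just power functions. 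Either proof is entirely adequate for this elementary lemma.
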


\begin{proof}
If a function $f$ is strictly convex on $\left( a,b\right) $, then from
Jensen's inequality for $a<x,y<b$ 
\begin{equation}
f\left( x\right) +f\left( y\right) <f\left( x+y\right)  \label{eq:ISP:5}
\end{equation}%
The relations in $\left( \ref{eq:ISP:4}\right) $ are proved by applying $%
\left( \ref{eq:ISP:5}\right) $ to the function $f\left( x\right) =-\left(
x^{\alpha }\right) $ for $0<\alpha <1,$ and to the function $f\left(
x\right) =x^{\alpha }$ for $1<\alpha \leq 2$.
\end{proof}

To assist this analysis we divide the invertibility region for an MA(2)
process into 5 sub-regions as shown in Figure \ref{fig:ISP:1}. These
sub-regions are defined as open sets, so that the entire invertibility
region consists of the union of the 5 sub-regions, the borders between them
and the origin. The inequalities defining these sub-regions are listed in $%
\left( \ref{eq:ISP:9}\right) .$

\begin{equation}
\begin{tabular}{ll}
Sub-region 1 = $\left\{ \theta :\theta _{1}<-1\text{ and }\theta _{2}<1\text{
and }\theta _{1}+\theta _{2}>-1\right\} $ \\ 
Sub-region 2 = $\left\{ \theta :\theta _{1}>-1\text{ and }\theta _{2}>0\text{
and }\theta _{1}+\theta _{2}<0\right\} $ \\ 
Sub-region 3 = $\left\{ \theta :\theta _{2}>0\text{ and }\theta _{2}<1\text{
and }\theta _{1}+\theta _{2}>0\text{ and }\theta _{1}-\theta _{2}<1\right\} $
\\ 
Sub-region 4 = $\left\{ \theta :\theta _{2}<0\text{ and } -1<\theta _{1}+\theta
_{2}<0\text{ and }\theta _{1}-\theta
_{2}<1\right\} $ \\ 
Sub-region 5 = $\left\{ \theta :\theta _{2}<0\text{ and }\theta _{1}+\theta
_{2}>0\text{ and }\theta _{1}-\theta _{2}<1\right\} $%
\end{tabular}
\label{eq:ISP:9}
\end{equation}

\begin{figure}[htb] \centering
\includegraphics[width=9cm]{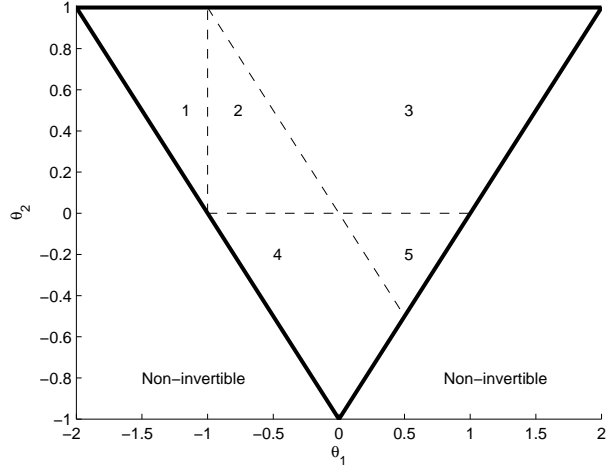}
\caption{The 5 sub-regions of the invertibility region of an MA(2) process.}
\label{fig:ISP:1}
\end{figure}

\begin{remark}
\label{rem:ISP:1}For an invertible MA(2)\ process, the set of values of $%
\left( \theta _{1},\theta _{2}\right) $ which satisfy condition (A2) in
Theorem \ref{th:LQD:2} consists of sub-region 3 and its borders with
sub-regions 2 and 5.
\end{remark}

The following theorem provides some properties of $g_{\alpha }\left( \theta
\right) $ for $\theta $ in sub-region 3.

\begin{theorem}
\label{th:ISP:2}If $\theta $ is an element of sub-region 3, then the
function $g_{\alpha }\left( \theta \right) $ satisfies the following
relations%
\begin{equation}
g_{\alpha }\left( \theta \right) \text{ is }\left\{ 
\begin{array}{cl}
>0 & \text{if }0<\alpha <1 \\ 
=0 & \text{if }\alpha =1 \\ 
<0 & \text{if }1<\alpha \leq 2%
\end{array}%
\right.  \label{eq:ISP:10}
\end{equation}
\end{theorem}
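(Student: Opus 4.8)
The plan is to specialize the definition (\ref{eq:LQD:2}) of $g_{\alpha}(\theta)$ to the case $q=2$, use the sign information available in sub-region 3 to remove all the absolute values, and then regroup the resulting expression into two pieces to each of which Lemma \ref{lem:ISP:1} applies directly.

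First I would write out $g_{\alpha}(\theta)$ for an MA(2) process. With $\theta_0 = 1$ the three sums in (\ref{eq:LQD:2}) give
$$g_{\alpha}(\theta) = 1 + |1+\theta_1|^{\alpha} - 2|1+\theta_1+\theta_2|^{\alpha} + |\theta_1+\theta_2|^{\alpha} + |\theta_2|^{\alpha}.$$
Next I would check that every quantity inside the absolute values is strictly positive throughout sub-region 3. The defining inequalities give $\theta_2 > 0$ and $\theta_1 + \theta_2 > 0$ at once, and $1 + \theta_1 + \theta_2 > 1 > 0$ is then immediate. The only one requiring an extra step is $1 + \theta_1 > 0$: since $\theta_1 + \theta_2 > 0$ and $\theta_2 < 1$ in sub-region 3, we have $\theta_1 > -\theta_2 > -1$, hence $1 + \theta_1 > 0$. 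All the absolute values may therefore be dropped.

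The key step is then the regrouping. I would split the sign-free expression as
$$g_{\alpha}(\theta) = \left[ 1^{\alpha} + (\theta_1+\theta_2)^{\alpha} - (1+\theta_1+\theta_2)^{\alpha} \right] + \left[ (1+\theta_1)^{\alpha} + \theta_2^{\alpha} - (1+\theta_1+\theta_2)^{\alpha} \right],$$
which is valid because the two copies of $(1+\theta_1+\theta_2)^{\alpha}$ combine to reproduce the coefficient $-2$. In the first bracket I take $x = 1$ and $y = \theta_1 + \theta_2$, so that $x + y = 1 + \theta_1 + \theta_2$; in the second bracket I take $x = 1 + \theta_1$ and $y = \theta_2$, again with $x + y = 1 + \theta_1 + \theta_2$. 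By the positivity established above, both pairs $(x,y)$ are admissible for Lemma \ref{lem:ISP:1}.

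Finally, applying Lemma \ref{lem:ISP:1} to each bracket gives that for $0 < \alpha < 1$ each bracket is strictly positive, so $g_{\alpha}(\theta) > 0$; for $\alpha = 1$ each bracket vanishes, so $g_{\alpha}(\theta) = 0$; and for $1 < \alpha \leq 2$ each bracket is strictly negative, so $g_{\alpha}(\theta) < 0$. This is exactly (\ref{eq:ISP:10}). I expect the only point needing care to be the verification that $1 + \theta_1 > 0$ on all of sub-region 3, since this is not one of the defining inequalities; once that is in hand and the correct grouping is identified, the conclusion is an immediate application of the lemma.
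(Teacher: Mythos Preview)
Your proof is correct and follows essentially the same approach as the paper: both write out $g_{\alpha}(\theta)$ for $q=2$, split it into the same two pieces $1^{\alpha}+(\theta_1+\theta_2)^{\alpha}-(1+\theta_1+\theta_2)^{\alpha}$ and $(1+\theta_1)^{\alpha}+\theta_2^{\alpha}-(1+\theta_1+\theta_2)^{\alpha}$, and apply Lemma~\ref{lem:ISP:1} to each. Your argument is in fact slightly more explicit than the paper's in justifying $1+\theta_1>0$ from the defining inequalities of sub-region~3.
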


\begin{proof}
By definition for $q=2,$ we have,%
\begin{equation}
g_{\alpha }\left( \theta \right) =1+\left\vert 1+\theta _{1}\right\vert
^{\alpha }-2\left\vert 1+\theta _{1}+\theta _{2}\right\vert ^{\alpha
}+\left\vert \theta _{1}+\theta _{2}\right\vert ^{\alpha }+\left\vert \theta
_{2}\right\vert ^{\alpha }.  \label{eq:ISP:11}
\end{equation}

All $\theta $ in sub-region 3 satisfy $\theta _{1}+\theta _{2}>0,$ so using
Lemma \ref{lem:ISP:1} we get for $0<\alpha <1$%
\begin{equation}
\left\vert 1+\theta _{1}+\theta _{2}\right\vert ^{\alpha }<1+\left\vert
\theta _{1}+\theta _{2}\right\vert ^{\alpha }.  \label{eq:ISP:12}
\end{equation}%
All $\theta $ in sub-regions 3, satisfy $\theta _{1}>-1$ and $\theta _{2}>0,$
so using Lemma \ref{lem:ISP:1} we get for $0<\alpha <1$%
\begin{equation}
\left\vert 1+\theta _{1}+\theta _{2}\right\vert ^{\alpha }<\left\vert
1+\theta _{1}\right\vert ^{\alpha }+\left\vert \theta _{2}\right\vert
^{\alpha }.  \label{eq:ISP:13}
\end{equation}%
Therefore, for all $\theta $ in sub-region 3 and for $0<\alpha <1$ we have
that $g_{\alpha }\left( \theta \right) $ is the sum of two strictly positive
terms and so is strictly positive.

Similarly, for all $\theta $ in sub-region 3 and for $\alpha =1$ we have
that $g_{\alpha }\left( \theta \right) $ is the sum of two zero terms and so
is zero. Finally, for all $\theta $ in sub-region 3 and for $1<\alpha \leq 2$
we have that $g_{\alpha }\left( \theta \right) $ is the sum of two strictly
negative terms and so is strictly negative.
\end{proof}

Theorems similar to Theorem \ref{th:ISP:2} for the other sub-regions and the
borders between the sub-regions can be proven using the same approach. To
cover all the sub-regions and borders of the invertibility region of an
MA(2)\ process requires several such theorems. These are straightforward and
are omitted from this paper.

A\ sub-region is said to be positive, zero or negative for a given $\alpha $
if $g_{\alpha }\left( \theta \right) $ is respectively positive, zero or
negative for all points in the sub-region. A\ sub-region is said to be mixed
for a given $\alpha $ if there exist some points in the sub-region for which 
$g_{\alpha }\left( \theta \right) $ is positive and other points for which $%
g_{\alpha }\left( \theta \right) $ is negative. Similar descriptions are
used to describe the borders between the sub-regions. In Tables \ref%
{tab:ISP:1} and \ref{tab:ISP:2} we present a categorisation of all the
sub-regions and the borders between the sub-regions using these
descriptions. 

\begin{table}[h] \centering%
\begin{tabular}{|l|c|c|c|}
\hline
& \multicolumn{1}{|c|}{$0<\alpha <1$} & \multicolumn{1}{|c|}{$\alpha =1$} & $%
1<\alpha \leq 2$ \\ \hline
\multicolumn{1}{|l|}{Positive Sub-regions} & All & 1,2,4,5 & 1,4 \\ 
\multicolumn{1}{|l|}{Zero Sub-regions} & None & 3 & None \\ 
\multicolumn{1}{|l|}{Negative Sub-regions} & None & None & 3 \\ 
\multicolumn{1}{|l|}{Mixed Sub-regions} & None & None & 2,5 \\ \hline
\end{tabular}%
\caption{Categorisation of the sub-regions of the invertibility region of an MA(2) process into positive, zero, 
negative and mixed sub-regions with respect of g$\protect_\alpha(\theta)$.}%
\label{tab:ISP:1}%
\end{table}%

\begin{table}[h] \centering%
\begin{tabular}{|l|c|c|c|}
\hline
& $0<\alpha <1$ & $\alpha =1$ & $1<\alpha \leq 2$ \\ \hline
Positive Borders & All & $\left( 1,2\right) ,\left( 2,4\right) ,\left(
4,5\right) $ & $\left( 1,2\right) ,\left( 2,4\right) ,\left( 4,5\right) $ \\ 
Zero Borders & None & $\left( 2,3\right) ,\left( 3,5\right) $ & None \\ 
Negative Borders & None & None & $\left( 2,3\right) ,\left( 3,5\right) $ \\ 
\hline
\end{tabular}%
\caption{Categorisation of the borders between the sub-regions of the invertibility region of an MA(2) process into
 positive, zero and negative borders with respect of g$\protect_\alpha(\theta)$. 
We use (a,b) to denote the border between sub-regions a and b.}\label%
{tab:ISP:2}%
\end{table}%

The set of invertible MA(1) processes is equivalent to the borders
sub-regions 2 and 4 and between sub-regions 3 and 5.
For iid processes $g_{\alpha }\left(\theta\right)=0$ for all
$\alpha$. It is perhaps helpful to see the results of Tables \ref{tab:ISP:1} and \ref%
{tab:ISP:2} in graphical form as provided in Figure \ref{fig:ISP:2}.

\begin{figure}[h] \centering
\begin{tabular}{cc}
\includegraphics[width=5cm]{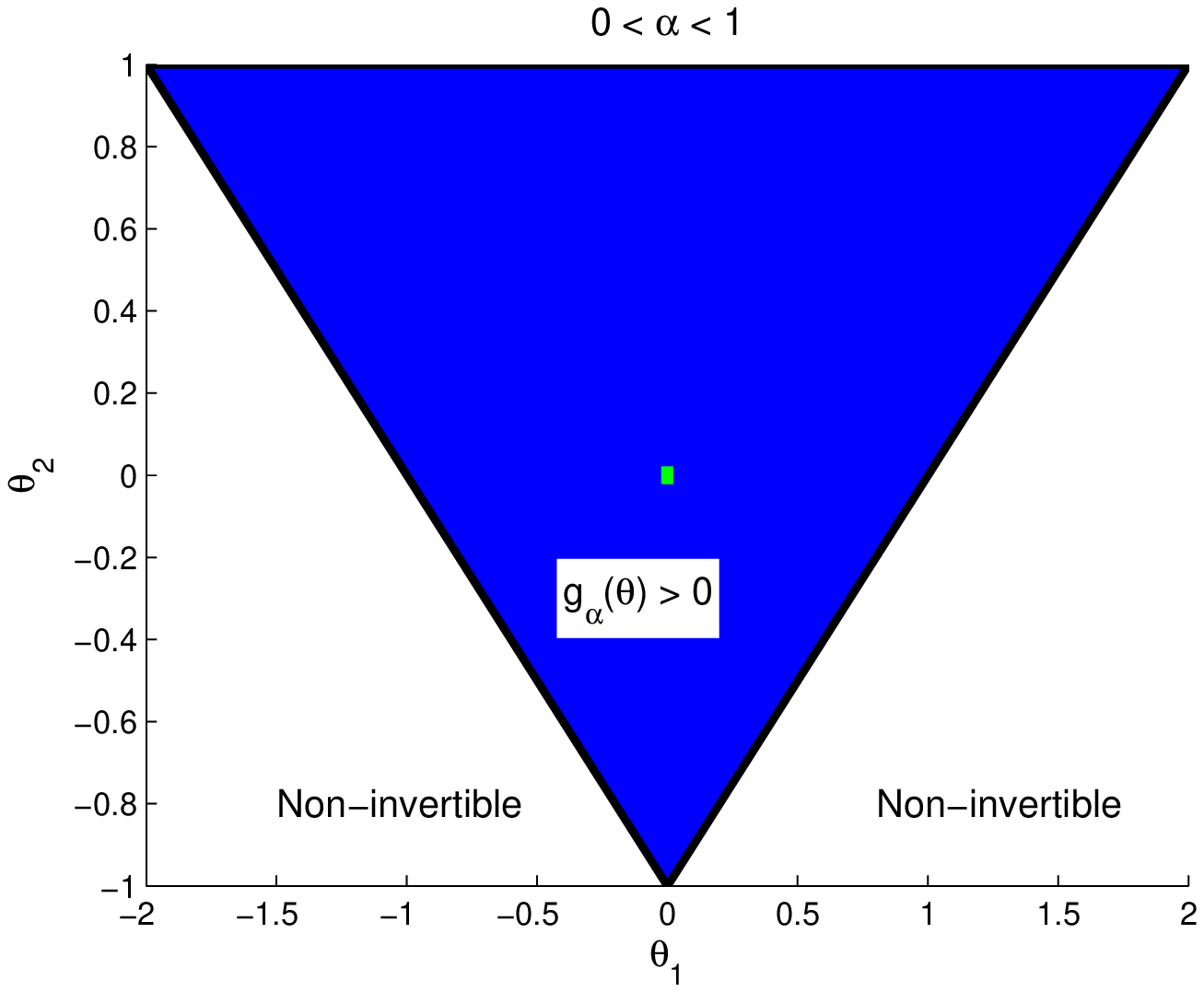} &
\includegraphics[width=5cm]{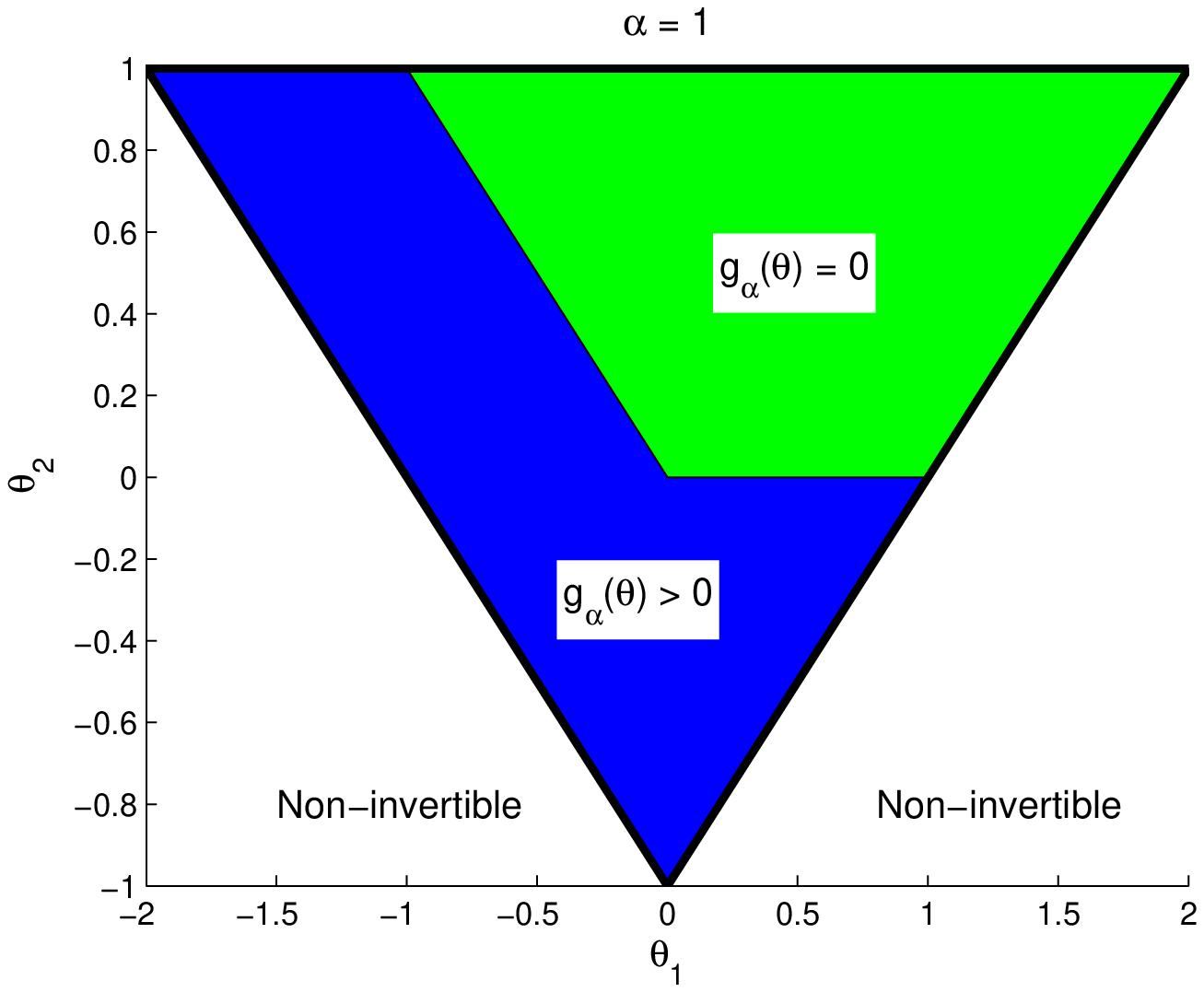} \\
\footnotesize{(a)}&
\footnotesize{(b)}\\
\includegraphics[width=5cm]{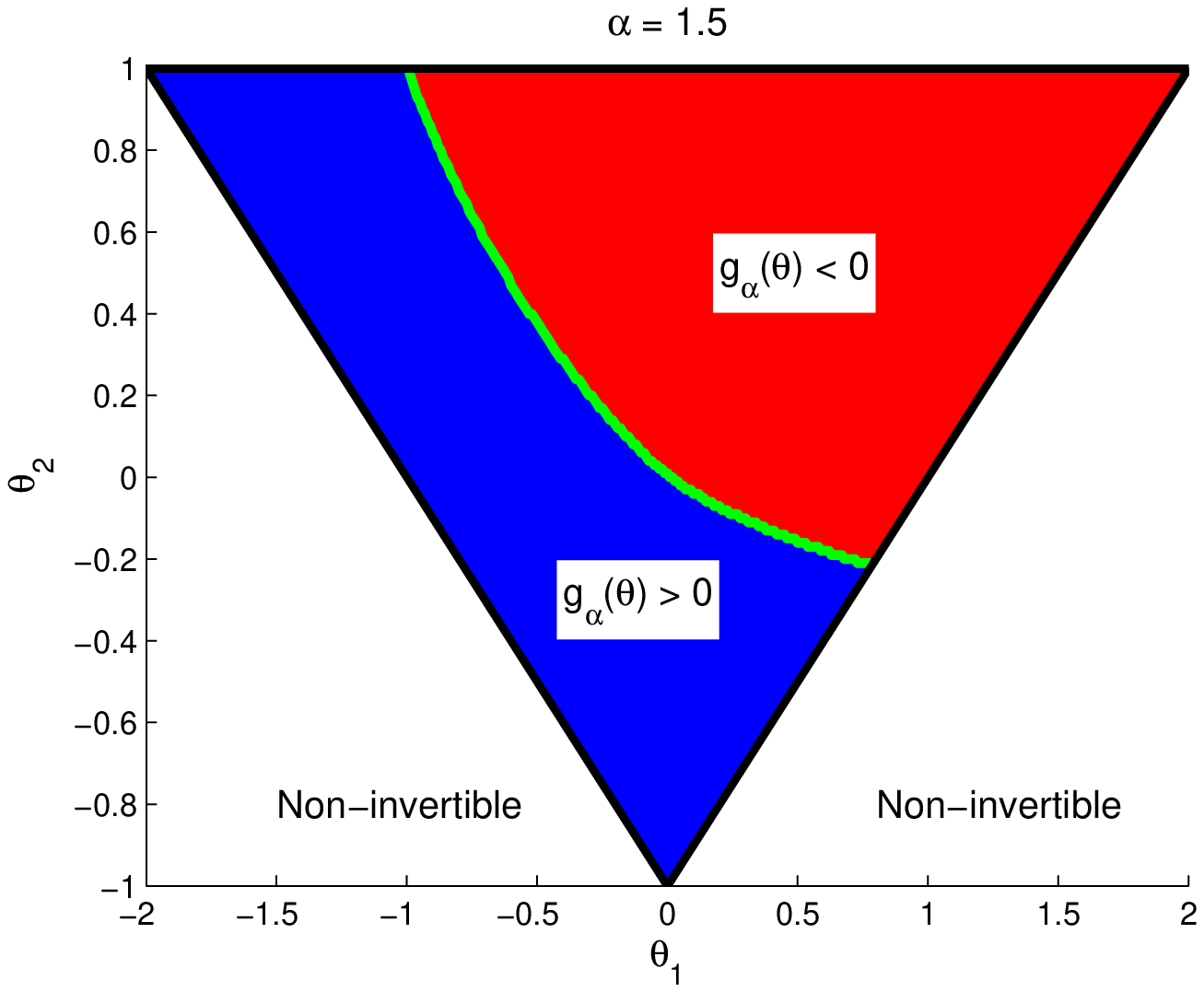} &
\includegraphics[width=5cm]{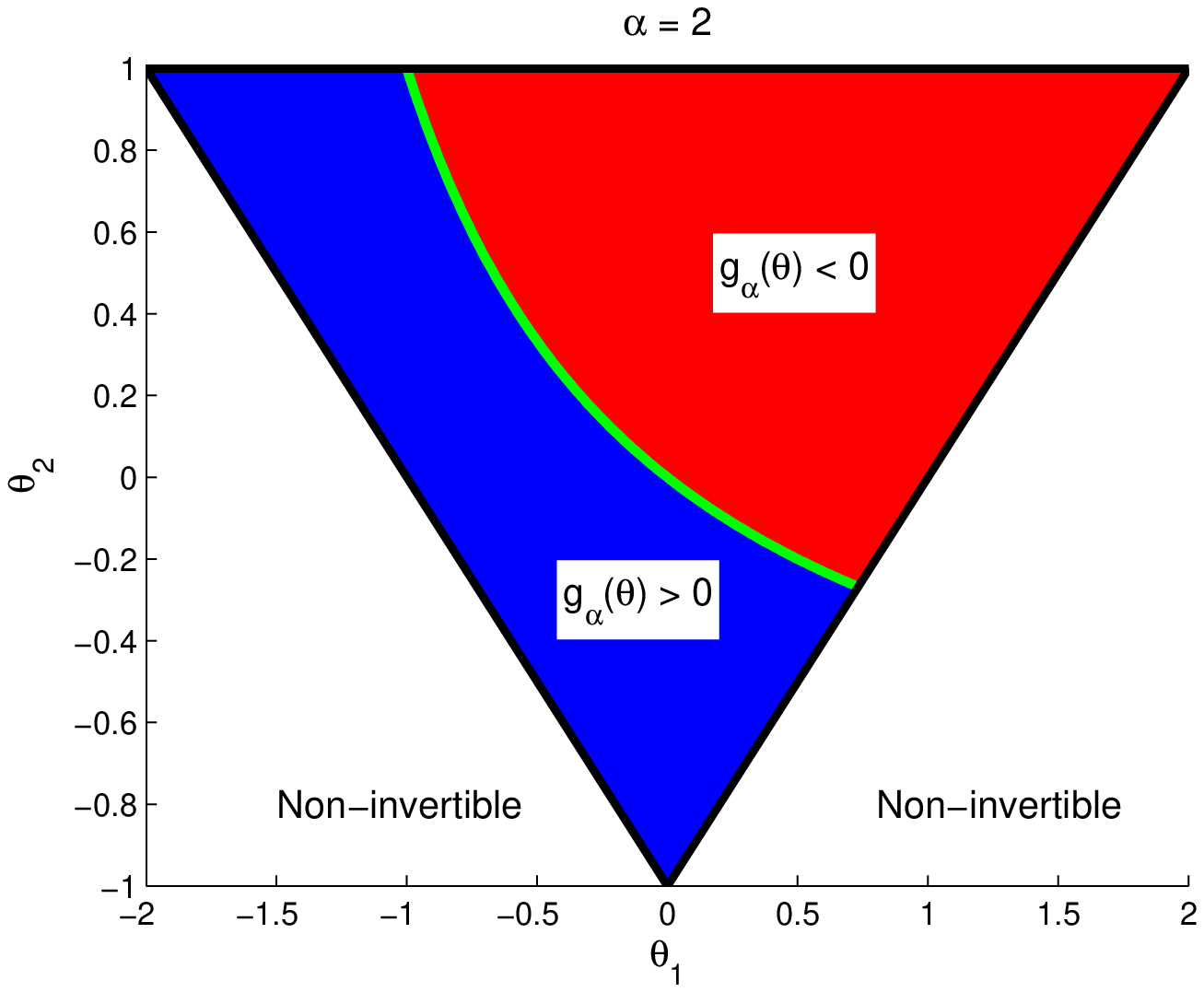} \\
\footnotesize{(c)}&
\footnotesize{(d)}\\
\end{tabular}
\caption{A graphical display of the categorisation of the invertibility region of MA(2) processes into 
positive (blue), zero (green) and negative (red) sub-regions for (a) $\protect 0 < \alpha < 1.0$, (b) $\protect\alpha$ = 1.0, 
(c) $\protect\alpha$ = 1.5 and (d) $\protect\alpha$ = 2.0.}
\label{fig:ISP:2}
\end{figure}

Figure \ref{fig:ISP:2}(a) is applicable to $g_{\alpha }\left( \theta \right) $ for
any $\alpha \in \left( 0,1\right) .$ Whilst Figures \ref{fig:ISP:2}(c)\ and %
\ref{fig:ISP:2}(d) appear similar, the locations of the respective green
lines, i.e. the sets 
\begin{equation}
\mathcal{D}_{\alpha }=\left\{ \theta :g_{\alpha }\left( \theta \right)
=0\right\} ,  \label{eq:ISP:15}
\end{equation}%
are not the same.

\begin{remark}
\label{rem:ISP:2}For an MA(2)\ process, it is straightforward to show that%
\begin{equation}
\mathcal{D}_{2}=\left\{ \theta :\theta _{1}+2\theta _{2}+\theta _{1}\theta
_{2}=0\right\} .  \label{eq:ISP:16}
\end{equation}%
For $1<\alpha <2,$ closed form expressions for $\mathcal{D}_{\alpha }$ have
not been obtained except to note that $\mathcal{D}_{\alpha }$ contains the
points $\theta =\left( 1,0,0\right) ^{\prime }$ and $\theta =\left(
1,-1,1\right) ^{\prime }.$ Strictly $\theta =\left( 1,-1,1\right) ^{\prime }$
is on the border of, but not in the invertibility region.
\end{remark}

To illustrate the behaviour of $\zeta _{p_{1},p_{2}}^{\left( r\right) }$
where $\theta $ lie in different sub-regions of the invertibility region, we
present plots of $\zeta _{0.50,0.95}^{\left( r\right) }$ for various
combinations of $\theta _{1},\theta _{2}$ and $\alpha $ in Figure \ref%
{fig:ISP:3}. 

\begin{figure}[h] \centering
\begin{tabular}{cc}
\includegraphics[width=5cm]{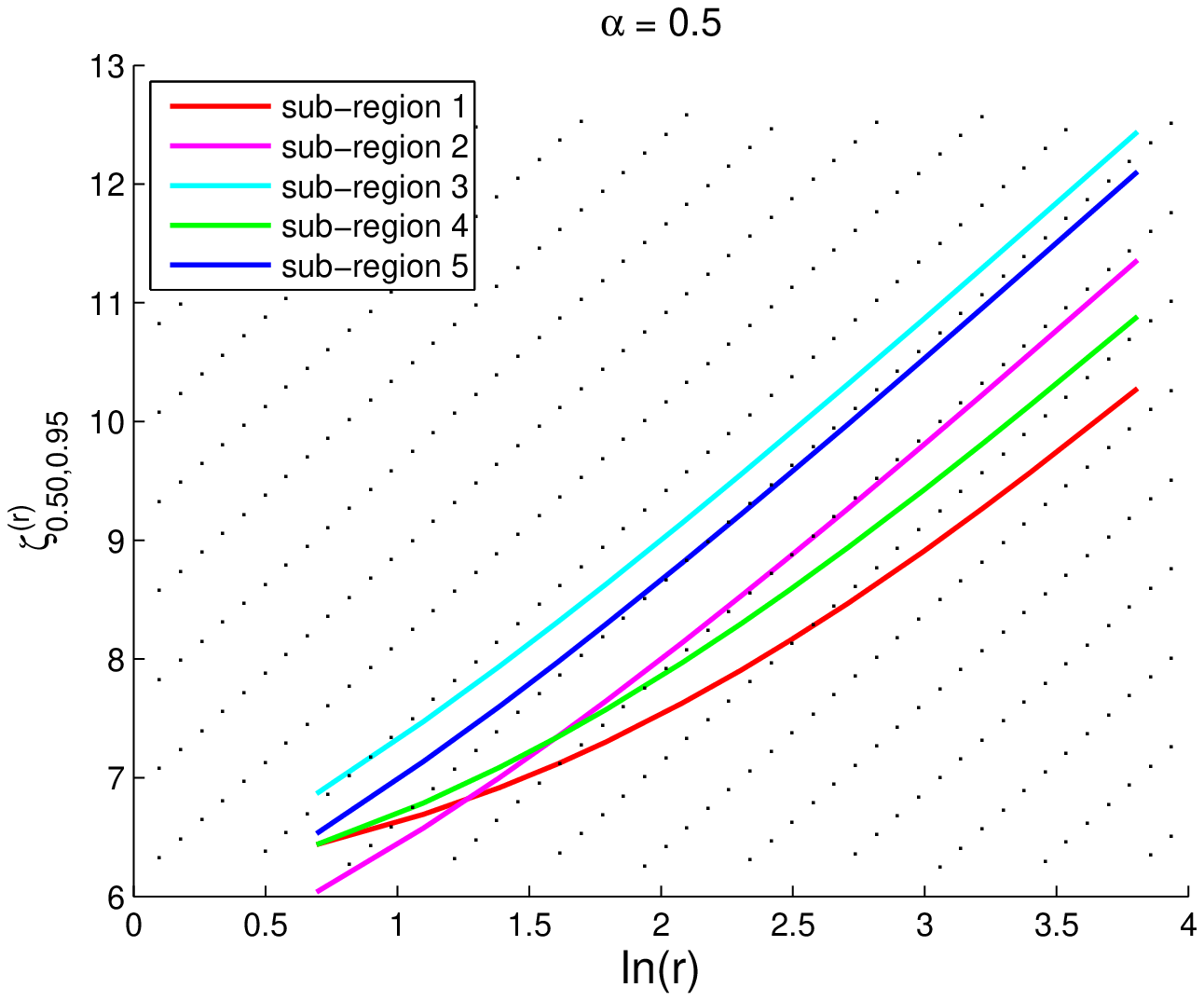} &
\includegraphics[width=5cm]{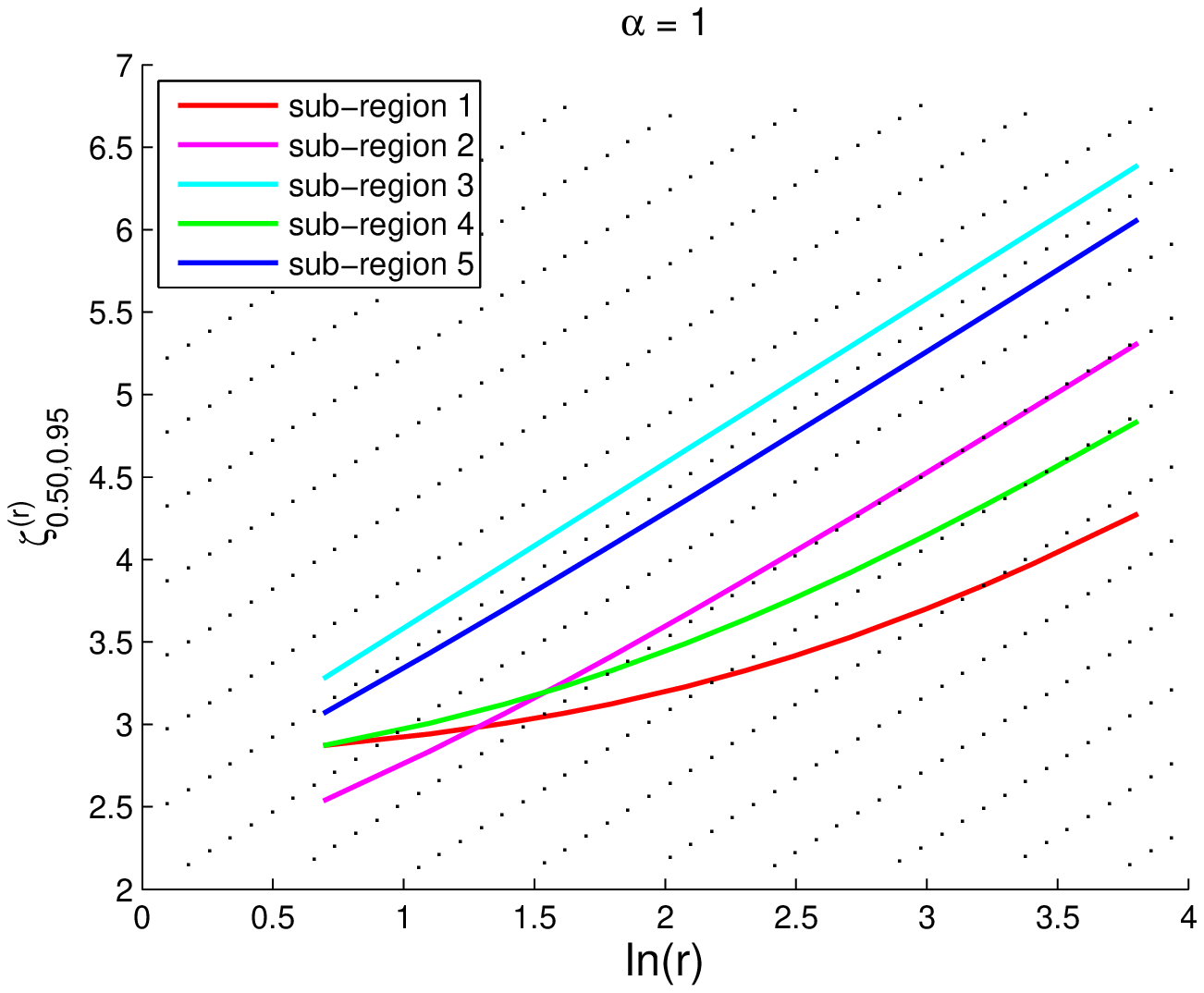} \\
\footnotesize{(a)}&
\footnotesize{(b)}\\
\includegraphics[width=5cm]{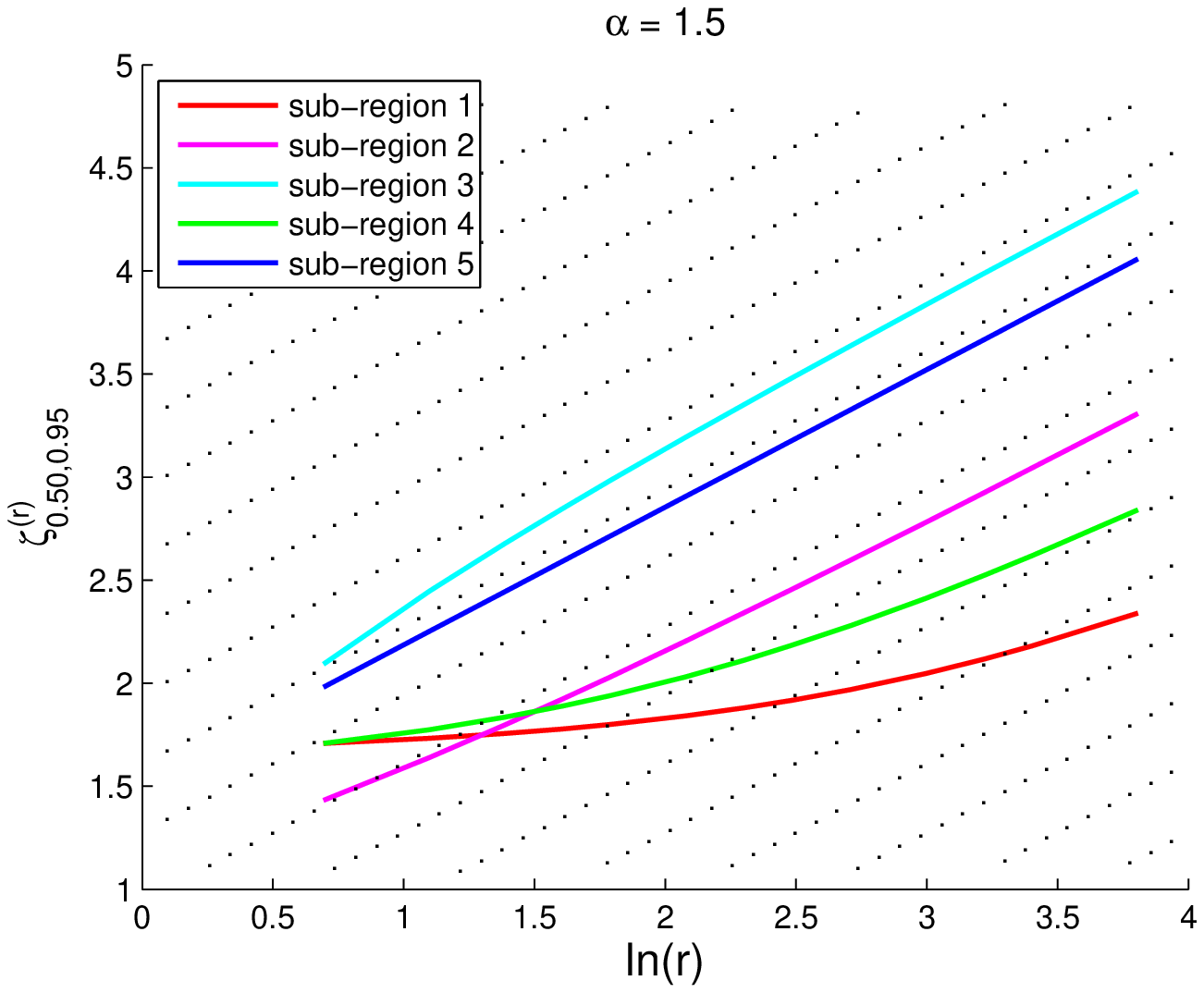} &
\includegraphics[width=5cm]{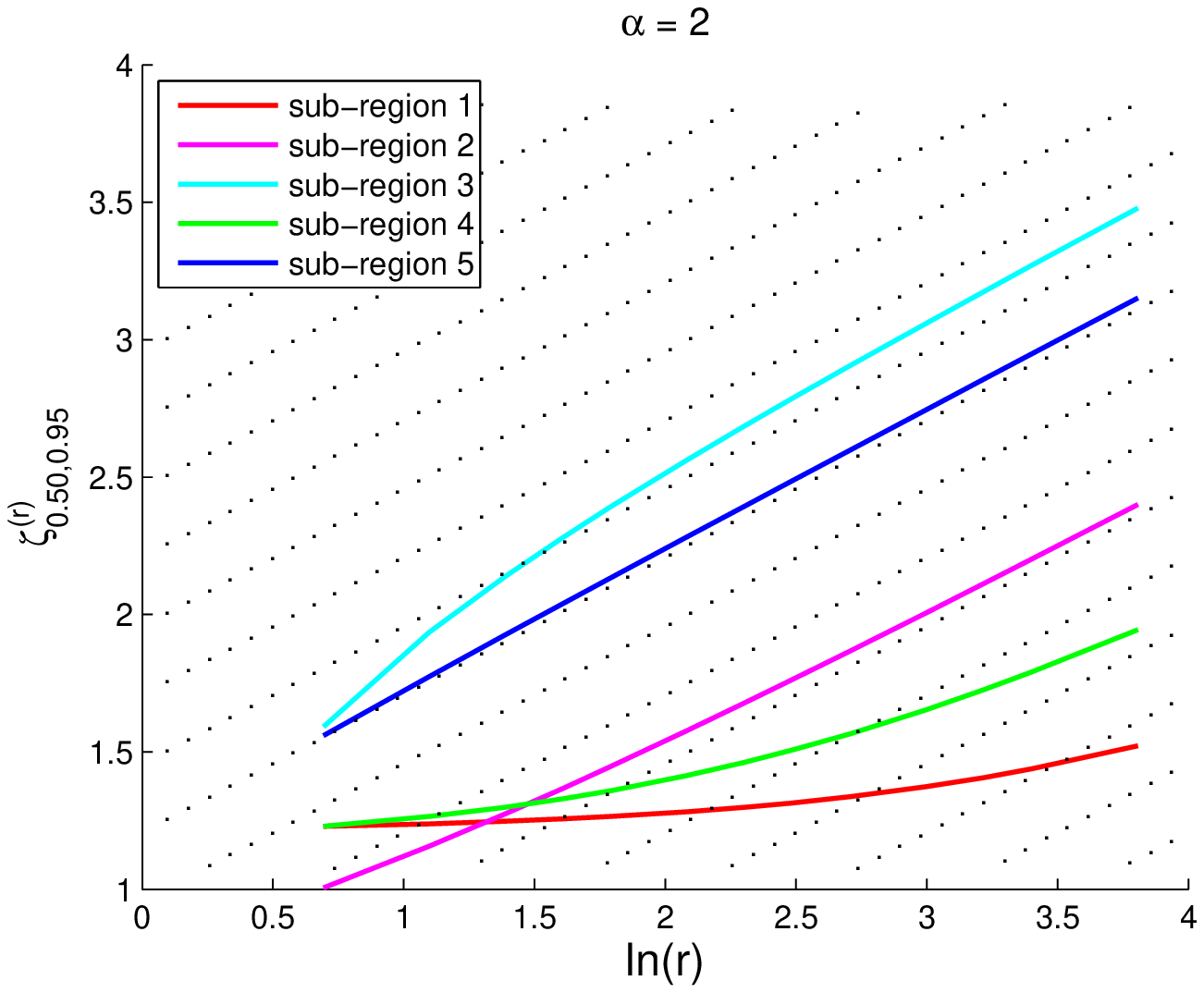} \\
\footnotesize{(c)}&
\footnotesize{(d)}\\
\end{tabular}
\caption{Plots of $\protect\zeta_{0.50,0.95}^{(r)}$ against the logarithm of the aggregation level for various  
symmetric stable MA(2) processes satisfying the conditions on Theorem \ref{th:LQD:1}.}
\label{fig:ISP:3}
\end{figure}

For each sub-figure in Figure \ref{fig:ISP:3}, we choose for sub-region 1: 
($\protect\theta_{1}$,$\protect\theta_{2}$) = (-1.4,0.6), 
sub-region 2: ($\protect\theta_{1}$,$\protect\theta_{2}$) = (-0.5,0.2), 
sub-region 3: ($\protect\theta_{1}$,$\protect\theta_{2}$) = (0.2,0.9), 
sub-region 4: ($\protect\theta_{1}$,$\protect\theta_{2}$) = (-0.2,-0.4) and
sub-region 5: ($\protect\theta_{1}$,$\protect\theta_{2}$) = (0.7,-0.2). 
The dotted parallel lines in Figure \ref{fig:ISP:3} have a slope $1/\alpha$ .

As shown in Corollary \ref{cor:LQD:1}, for each choice of $\alpha ,\theta$
in Figure \ref{fig:ISP:3}, it can be seen that the plot of 
$\zeta _{0.50,0.95}^{\left( r\right) }$ against $\ln r$ is concave, linear
or convex wherever $g_{\alpha }\left( \theta \right) $ is negative, zero or
positive and that the sign of $g_{\alpha }\left( \theta \right) $ agrees
with the results in Table \ref{tab:ISP:1}. In all cases the derivative $%
\partial \zeta _{p_{1},p_{2}}^{\left( r\right) }/\partial \ln r$ approaches $%
1/\alpha $ with increasing $r.$ The convergence of the derivative $\partial
\zeta _{p_{1},p_{2}}^{\left( r\right) }/\partial \ln r$ to $1/\alpha $ can
be much slower in the positive sub-regions than in the negative sub-regions.
The example shown in Figure \ref{fig:ISP:3}(d) for $\alpha =2$ and
sub-region 1, still has a derivative $\partial \zeta _{p_{1},p_{2}}^{\left(
r\right) }/\partial \ln r$ much less than $1/\alpha $ at an aggregation
level of $\exp \left( 3.8\right) \approx 45$.

\section*{Acknowledgement}
The author would like to thank Dr N. Kordzakhia and A/Prof A. Kozek for
their comments on this paper.

\bibliographystyle{model2-names}
\bibliography{acompat,OnTheLogQuantileDifferenceOfTheTemporalAggregationOfAStableMovingAverageProcess}

\newif\ifabfull\abfulltrue
\begin{thebibliography}{6}
\expandafter\ifx\csname natexlab\endcsname\relax\def\natexlab#1{#1}\fi
\providecommand{\url}[1]{\texttt{#1}}
\providecommand{\href}[2]{#2}
\providecommand{\path}[1]{#1}
\providecommand{\DOIprefix}{doi:}
\providecommand{\ArXivprefix}{arXiv:}
\providecommand{\URLprefix}{URL: }
\providecommand{\Pubmedprefix}{pmid:}
\providecommand{\doi}[1]{\href{http://dx.doi.org/#1}{\path{#1}}}
\providecommand{\Pubmed}[1]{\href{pmid:#1}{\path{#1}}}
\providecommand{\bibinfo}[2]{#2}
\ifx\xfnm\relax \def\xfnm[#1]{\unskip,\space#1}\fi
\bibitem[{Chan et~al.(2008)Chan, Cheung, Zhang and Wu}]{ci:CCZW08}
\bibinfo{author}{Chan, W.}, \bibinfo{author}{Cheung, S.},
  \bibinfo{author}{Zhang, L.}, \bibinfo{author}{Wu, K.}, \bibinfo{year}{2008}.
\newblock \bibinfo{title}{Temporal aggregation of equity return time series
  models}.
\newblock \bibinfo{journal}{Mathematics and Computers in Simulation}
  \bibinfo{volume}{78}, \bibinfo{pages}{172--180}.
\bibitem[{Liu and David(1989)}]{ci:LD89}
\bibinfo{author}{Liu, J.}, \bibinfo{author}{David, H.}, \bibinfo{year}{1989}.
\newblock \bibinfo{title}{Quantiles of sums and expected values of ordered
  sums}.
\newblock \bibinfo{journal}{Australian Journal of Statistics}
  \bibinfo{volume}{31}, \bibinfo{pages}{469--474}.
\bibitem[{Nolan(1998)}]{ci:N98}
\bibinfo{author}{Nolan, J.}, \bibinfo{year}{1998}.
\newblock \bibinfo{title}{Parameterizations and modes of stable distributions}.
\newblock \bibinfo{journal}{Statistics and Probability Letters}
  \bibinfo{volume}{38}, \bibinfo{pages}{187--195}.
\bibitem[{Silvestrini and Veredas(2008)}]{ci:SV08}
\bibinfo{author}{Silvestrini, A.}, \bibinfo{author}{Veredas, D.},
  \bibinfo{year}{2008}.
\newblock \bibinfo{title}{Temporal aggregation of univariate and multivariate
  time series models: A survey}.
\newblock \bibinfo{journal}{Journal of Economic Surveys} \bibinfo{volume}{22},
  \bibinfo{pages}{458--497}.
\bibitem[{Watson and Gordon(1986)}]{ci:WG86}
\bibinfo{author}{Watson, R.}, \bibinfo{author}{Gordon, I.},
  \bibinfo{year}{1986}.
\newblock \bibinfo{title}{On quantiles of sums}.
\newblock \bibinfo{journal}{Australian Journal of Statistics}
  \bibinfo{volume}{28}, \bibinfo{pages}{192--199}.
\bibitem[{Wise(1956)}]{ci:W56}
\bibinfo{author}{Wise, J.}, \bibinfo{year}{1956}.
\newblock \bibinfo{title}{Stationarity conditions for stochastic processes of
  the autoregressive and moving-average type}.
\newblock \bibinfo{journal}{Biometrika} \bibinfo{volume}{43},
  \bibinfo{pages}{215--219}.

\end{thebibliography}

\end{document}